\documentclass[11pt]{amsart}
\usepackage{fullpage,verbatim,amssymb,mathtools}
\usepackage[active]{srcltx}
\usepackage{hyperref}
\usepackage{enumerate}
\usepackage[usenames,dvipsnames]{color}
\usepackage{bm}
\usepackage{bbm}

\definecolor{blue}{rgb}{0,0,1}
\definecolor{red}{rgb}{1,0,0}
\definecolor{green}{rgb}{0,.6,.2}
\definecolor{purple}{rgb}{1,0,1}
\long\def\red#1\endred{\textcolor{red}{#1}}
\long\def\blue#1\endblue{\textcolor{blue}{#1}}
\long\def\purple#1\endpurple{\textcolor{purple}{ #1}}
\long\def\green#1\endgreen{\textcolor{green}{#1}}


\DeclareMathOperator{\ord}{ord}

\DeclareMathOperator{\Res}{Res}
\DeclareMathOperator{\tr}{tr}
\DeclareMathOperator{\lcm}{lcm}

\newcommand{\C}{\mathbb{C}}
\newcommand{\R}{\mathbb{R}}
\newcommand{\Z}{\mathbb{Z}}
\newcommand{\Q}{\mathbb{Q}}
\newtheorem{theorem}{Theorem}
\newtheorem{lemma}[theorem]{Lemma}
\newtheorem{prop}[theorem]{Proposition}

\theoremstyle{remark}
\newtheorem{remarks}[theorem]{Remarks}

\numberwithin{theorem}{section}
\numberwithin{equation}{section}

\title{An extension of Venkatesh's converse theorem to the Selberg class}
\author{Andrew R. Booker}
\author{Michael Farmer}
\author{Min Lee}
\address{School of Mathematics, University of Bristol, Woodland Road, Bristol, BS8 1UG}
\thanks{M.~L.\ was supported by
Royal Society University Research Fellowship ``Automorphic forms,
L-functions and trace formulas''.}

\date{}

\begin{document}
\maketitle

\begin{abstract}
We extend Venkatesh's proof of the converse theorem for classical holomorphic modular forms to arbitrary level and character. The method of proof, via the Petersson trace formula, allows us to treat arbitrary degree $2$ gamma factors of Selberg class type.
\end{abstract}

\section{Introduction}
Venkatesh, in his thesis \cite{akshay}, gave a new proof of the classical converse theorem for modular forms of level $1$, in the context of Langlands' ``Beyond Endoscopy''. The key analytic input is Voronoi summation, or equivalently the functional equation for additive twists.
More precisely, given a modular form $f(z)=\sum_{n=1}^\infty f_nn^{\frac{k-1}{2}}e(nz)$
of weight $k$, level $N$, and nebentypus character $\chi$, we define the complete additive twist
\[
\Lambda_f(s,\alpha)=\Gamma_\C\bigl(s+\tfrac{k-1}{2}\bigr)\sum_{n=1}^\infty\frac{f_ne(n\alpha)}{n^s}
\quad\text{ for }\alpha\in\Q,
\]
where $e(z)=e^{2\pi iz}$ and $\Gamma_\C(s) = 2(2\pi)^{-s}\Gamma(s)$.
Then, for any $q\in N\Z_{>0}$ and any $a,\bar{a}\in\Z$ with $a\bar{a}\equiv1\pmod{q}$,
$\Lambda_f\bigl(s,\tfrac{a}{q}\bigr)$ and $\Lambda_f\bigl(s,-\tfrac{\bar{a}}{q})$ continue to entire functions and satisfy the functional equation \cite[(A.10)]{kmv02}
\[
\Lambda_f\bigl(s,\tfrac{a}{q}\bigr)= i^k\chi(\bar{a})q^{1-2s}\Lambda_f\bigl(1-s,-\tfrac{\bar{a}}{q}\bigr).
\]

In this paper we generalize Venkatesh's proof to forms of arbitrary level and character.
Our precise result is the following:
\begin{theorem}\label{Main theorem}
Consider $\{f_n\}_{n\ge1}$, $N$, $\chi$, $\omega$, and $\gamma(s)$ with the following properties:
\begin{enumerate}
    \item $\{f_n\}_{n\ge1}$ is a sequence of complex numbers
    such that $\sum_{n=1}^\infty f_nn^{-s}$ converges absolutely for $\Re(s)>1$;
    \item $N$ is a natural number and $\chi$ is a Dirichlet character modulo $N$;
    \item $\omega$ is a nonzero complex number;
    \item $\gamma(s)=Q^s\prod_{j=1}^r\Gamma(\lambda_js+\mu_j)$ for some numbers $Q,\lambda_j\in\R_{>0}$ and $\mu_j\in\C$ with $\Re(\mu_j)>-\frac12\lambda_j$ and $\sum_{j=1}^r\lambda_j=1$.
\end{enumerate}
Given any $\alpha\in\Q$, define the complete twisted $L$-function
\[
\Lambda_f(s,\alpha)=\gamma(s)\sum_{n=1}^\infty\frac{f_ne(n\alpha)}{n^s}.
\]
Suppose that for every $q\in N\Z_{>0}$ and every pair $a,\bar{a}\in\Z$ with $a\bar{a}\equiv1\pmod q$, 
$\Lambda_f\bigl(s,\tfrac{a}{q}\bigr)$ and $\Lambda_f\bigl(s,-\tfrac{\bar{a}}{q}\bigr)$ continue to entire functions of finite order and satisfy the functional equation
\begin{equation}{\label{functional equation}}
\Lambda_f\bigl(s,\tfrac{a}{q}\bigr) = \omega\chi(\bar{a})q^{1-2s}\Lambda_f\bigl(1-s,-\tfrac{\bar{a}}{q}\bigr).
\end{equation}
Then there exists $k\in\Z_{>0}$ such that $f(z):=\sum_{n=1}^{\infty}f_nn^{\frac{k-1}{2}}e(nz)$
is a modular form of weight $k$, level $N$, and nebentypus character $\chi$.
\end{theorem}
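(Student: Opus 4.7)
The approach is to generalize Venkatesh's Petersson-trace-formula argument to $\Gamma_0(N)$ with nebentypus $\chi$, accommodating a general Selberg-class degree-$2$ gamma factor in place of $\Gamma_\C(s+(k-1)/2)$. The argument proceeds in three phases: first, extract an integer weight $k$ from $\gamma(s)$; second, set up a test identity using the Petersson trace formula on $S_k(\Gamma_0(N),\chi)$ with Kloosterman sums opened by hand; third, apply the assumed functional equation \eqref{functional equation} inside this identity and match $\{f_n\}$ with the Fourier coefficients of an element of the space of modular forms of weight $k$, level $N$, and nebentypus $\chi$.

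\textbf{Step 1: recovering $k$.} Since $\sum_j\lambda_j=1$ with $\Re(\mu_j)>-\lambda_j/2$, the Selberg degree of $\gamma(s)$ equals $2$. Repeated application of the Gauss--Legendre multiplication formula collapses $\prod_j\Gamma(\lambda_j s+\mu_j)$ into a single factor of the form $C\,\Gamma_\C(s+\mu)$ after absorbing the resulting exponentials into $Q^s$, for some $C\in\R_{>0}$ and some $\mu\in\C$ with $\Re(\mu)>-\tfrac{1}{2}$. The functional equation \eqref{functional equation}, together with entireness of $\Lambda_f(s,\alpha)$ and the pole structure of $\Gamma$, then pins $\mu=(k-1)/2$ for some $k\in\Z_{>0}$ and forces $\omega$ to be unimodular with $i^{-k}\omega$ real.

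\textbf{Step 2: Petersson side with Kloosterman expansion.} With $k$ fixed, apply the Petersson trace formula to an orthonormal basis $\mathcal{B}_k(N,\chi)$ of $S_k(\Gamma_0(N),\chi)$:
\[
\sum_{g\in\mathcal{B}_k(N,\chi)}\frac{\overline{a_g(m)}\,a_g(n)}{\|g\|^2}=\delta_{m,n}+2\pi i^{-k}\sum_{N\mid c}\frac{S_\chi(m,n;c)}{c}\,J_{k-1}\!\left(\frac{4\pi\sqrt{mn}}{c}\right).
\]
Pair this with $\{f_n\}$ against a Schwartz cutoff $V(n/X)$ whose Mellin transform decays rapidly on vertical strips. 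Opening $S_\chi(m,n;c)=\sum_{a\bar{a}\equiv 1\,(c)}\chi(a)e\!\bigl(\tfrac{ma+n\bar{a}}{c}\bigr)$ and Mellin-inverting the Bessel function rewrites the inner $n$-sum as a contour integral of $\Lambda_f(s,-\bar{a}/c)/\gamma(s)$ against the Mellin kernel of $J_{k-1}$, generalizing Venkatesh's Voronoi-style identity to arbitrary modulus.

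\textbf{Step 3: functional equations, conclusion, and main obstacle.} Substitute \eqref{functional equation} into each contour integral and shift past $\Re(s)=\tfrac12$. By Step 1 the ratio $\gamma(1-s)/\gamma(s)$ is exactly the Mellin symbol of the weight-$k$ Bessel kernel, so the gamma and Bessel factors telescope; after summing over $a$ and $c$, the right-hand side collapses to $f_m V(m/X)$ plus a controlled tail. Matching against the spectral side and letting $X\to\infty$ expresses $\{f_m\}$ as a linear combination of Fourier coefficients of forms in the weight-$k$ level-$N$ nebentypus-$\chi$ space, proving the claim. The main obstacle is Step 1: forcing $\mu$ to be a half-integer of the correct parity cannot be done by purely local gamma-factor manipulations and must instead use entireness and the functional equation simultaneously across all moduli $q\in N\Z_{>0}$. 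A secondary technical difficulty is that only $\Re(s)>1$ convergence of $\sum f_n n^{-s}$ is assumed, so every contour shift and summation interchange in Steps 2 and 3 must be justified through the rapid decay of the cutoff $V$.
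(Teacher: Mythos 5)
Your Step 1 contains a fatal gap, and it cannot be patched; the paper's proof works precisely because it avoids what you are proposing. You claim that ``repeated application of the Gauss--Legendre multiplication formula collapses $\prod_j\Gamma(\lambda_j s+\mu_j)$ into a single factor of the form $C\,\Gamma_\C(s+\mu)$.'' This is false for a general gamma factor satisfying hypothesis (4). Take for instance $\gamma(s)=\Gamma(s/2)\Gamma(s/2+1/4)$: its poles lie along two arithmetic progressions of spacing $2$, offset by $1/2$, whereas any $cH^s\Gamma_\C(s+\mu)$ has poles along a single progression of spacing $1$; there is no choice of $C,H,\mu$ that makes these agree. The multiplication formula only collapses products when the shifts $\mu_j/\lambda_j$ lie in a single residue system (mod $1/\lambda_j$) with the right spacings, which is not implied by (4). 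In the paper this reduction appears as Lemma~\ref{no pole lemma}, and it requires the strong extra input that $\gamma$ has poles at all but finitely many negative integers --- a property which is \emph{derived} there, not assumed, and only under the hypothesis that a certain auxiliary integral $F_k(1,1)$ vanishes for every admissible $k$. You do flag Step 1 as ``the main obstacle,'' but you misdiagnose it as a parity problem for $\mu$; the real problem is that the collapse to a single $\Gamma_\C$ simply does not hold, so Steps 2 and 3, which rely on the ratio $\gamma(1-s)/\gamma(s)$ being ``exactly the Mellin symbol of the weight-$k$ Bessel kernel'' (i.e.\ on the gamma factors telescoping), break down as well.

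The paper's resolution inverts your order of operations and is the key insight worth internalizing. It runs the Petersson trace formula for an \emph{arbitrary} gamma factor: applying \eqref{functional equation} inside \eqref{e:Kn_def}--\eqref{final expression} and extracting the residue at $s=1$ produces $f_n$ weighted by the integral $F_k(1,1)$ of \eqref{Main integral}, without any need for the Bessel and gamma kernels to cancel. If $F_k(1,1)\ne0$ for some admissible $k$, Proposition~\ref{prop1} already gives $f\in S_k(N,\chi)$ and one is done. Only in the degenerate case where $F_k(1,1)=0$ for \emph{every} admissible $k$ does the paper (Proposition~\ref{prop2}, via Chebyshev/Fourier orthogonality of $W_n$ and $\cos(n\theta)$) deduce that $\gamma$ has poles at all but finitely many negative integers, whence by Lemmas~\ref{classifying gamma factors}--\ref{no pole lemma} it is of the classical form $cH^s\Gamma_\C(s+\frac{\ell-1}{2})$; a further Weil-type argument then forces $H=1$ and reduces to Hecke. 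Your pipeline could at best recover this second case; the first --- which is where the trace formula genuinely earns its keep for a general Selberg-class $\gamma$ --- is missing from your plan. (Your smooth-cutoff variant of Step 2 is a reasonable technical alternative to the paper's fixed-$s$ Dirichlet series approach, but it does not save Step 1.)
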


\begin{remarks}\
\begin{enumerate}
    \item One feature of the argument is that it admits a generalization to gamma factors of Selberg class type. In this way, our result can be viewed as a converse theorem for degree $2$ elements of the Selberg class, albeit with infinitely many functional equations. Recently, Kaczorowski and Perelli \cite{kp20} have classified the elements of the Selberg class of conductor $1$ without the need for any twists. Very little is known for higher conductor, however, and our result is the first that we are aware of to consider both arbitrary level and degree $2$ gamma factor.
    \item For $k>1$, it is enough to assume that the finite $L$-functions $L_f\bigl(s,\tfrac{a}{q}\bigr)=\Lambda_f\bigl(s,\tfrac{a}{q}\bigr)/\gamma(s)$ have analytic continuation to $\C$, and in this case we can also conclude that $f$ is cuspidal. When $k=1$, we need to know that $\Lambda_f\bigl(s,\tfrac{a}{q}\bigr)$ is analytic at $s=0$, and there are noncuspidal examples satisfying all of the hypotheses of Theorem~\ref{Main theorem}.
    \item If we suppose that $L_f(s,1)$ lies in the Selberg class then we can combine the transformation formula in \cite[Theorem~2]{kp15} with the Vandermonde argument in \cite[Lemma~2.4]{bk14} to constrain the possible poles of $\Lambda_f\bigl(s,\tfrac{a}{q}\bigr)$. In this way, it is likely possible to prove a result that allows the twisted $L$-functions $\Lambda_f\bigl(s,\tfrac{a}{q}\bigr)$ to have arbitrary poles inside the critical strip, but we have not pursued this.
    \item Using the Bruggeman--Kuznetsov trace formula and the method of \cite{m21}, it is likely possible to prove a similar converse theorem for Maass forms.
\end{enumerate}
\end{remarks}

\section{Lemmas}
We begin with some preparatory lemmas.
 
\begin{lemma}{\label{classifying gamma factors}}
Let $\gamma(s)=\prod_{j=1}^r\Gamma(\lambda_js+\mu_j)$, where $\lambda_j\in\R_{>0}$, $\mu_j\in\C$. If $\gamma(s)$ has poles at all but finitely many negative integers then $\sum_{j=1}^r\lambda_j\ge1$.
\end{lemma}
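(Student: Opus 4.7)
The plan is a density argument that compares the asymptotic number of negative integers which are poles of $\gamma(s)$ to the number contributed by each $\Gamma$-factor separately.

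First I would analyze a single factor. A negative integer $-m$ with $m\in\Z_{>0}$ is a pole of $\Gamma(\lambda_j s+\mu_j)$ if and only if $\lambda_j m-\mu_j\in\Z_{\ge0}$. I would split into cases. If $\mu_j\notin\R$, then $\lambda_j m-\mu_j$ is never real, so no negative integer is a pole of this factor. If $\mu_j\in\R$ but $\lambda_j\notin\Q$, then $\lambda_j m-\mu_j\in\Z$ has at most one solution $m\in\Z$, so again only finitely many negative integers are poles.

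In the remaining case $\lambda_j=p_j/q_j\in\Q$ in lowest terms and $\mu_j\in\R$, the condition $\lambda_j m-\mu_j\in\Z$ forces $q_j\mu_j\in\Z$, after which the valid $m$'s form an arithmetic progression modulo $q_j$. Therefore the number of $m\in[1,T]$ such that $-m$ is a pole of $\Gamma(\lambda_js+\mu_j)$ is at most $T/q_j+O(1)\le \lambda_j T+O(1)$, since $p_j\ge1$.

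Finally, a union bound over $j=1,\dots,r$ shows that the number of negative integers in $[-T,-1]$ which are poles of $\gamma(s)$ is at most $\bigl(\sum_{j=1}^r\lambda_j\bigr)T+O(1)$. By hypothesis, at most finitely many negative integers are not poles of $\gamma(s)$, so this count is at least $T-O(1)$. Dividing by $T$ and letting $T\to\infty$ yields $\sum_{j=1}^r\lambda_j\ge1$. The only real content is the case analysis above — the non-negativity constraint $\lambda_j m\ge\mu_j$ and the constraint $q_j\mu_j\in\Z$ are trivial bookkeeping — so I expect the proof to be very short with no serious obstacle.
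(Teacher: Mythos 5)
Your proposal is correct and follows essentially the same route as the paper's proof: count the negative integers in $[-T,-1]$ that are poles of each factor $\Gamma(\lambda_js+\mu_j)$, bound this by $\lambda_jT+O(1)$, take a union bound, and compare with the density forced by the hypothesis. The paper states the per-factor bound directly via the spacing $\lambda_j^{-1}$ of the poles, while you justify it through a short case analysis, but the underlying counting argument is the same.
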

\begin{proof}
Since the poles of $\Gamma(\lambda_js+\mu_j)$ are spaced $\lambda_j^{-1}$ apart,
the number of poles of $\gamma(s)$ in $\Z\cap[-T,0)$ for large $T>0$ is at most $T\sum_{j=1}^r\lambda_j+O(1)$. If all but finitely many negative integers are poles of $\gamma(s)$ then this count is at least $T+O(1)$. The conclusion follows on taking $T\to\infty$.
\end{proof}

\begin{lemma}{\label{no pole lemma}}
Let $\gamma(s)$ be as in the statement of Theorem~\ref{Main theorem}, and suppose that $\gamma(s)$ has poles at all but finitely many negative integers. Then $\gamma(s)$ is of the form $cP(s)H^s\Gamma_\C(s)$, where $c,H\in\R_{>0}$ and $P$ is a monic polynomial whose roots are distinct non-positive integers.
\end{lemma}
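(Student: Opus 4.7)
The plan is to combine the pole-counting reasoning from Lemma~\ref{classifying gamma factors} with Gauss's multiplication formula for $\Gamma$. The counting forces each $\lambda_j$ to be the reciprocal of a positive integer and the arithmetic progressions of poles contributed by the various factors to assemble into an exact (disjoint) cover of $\Z$ by residue classes; the multiplication formula then collapses the $\Gamma$-product to $\Gamma(s)$ times a polynomial and an exponential.

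Step one: refining the pole count. The proof of Lemma~\ref{classifying gamma factors} bounds the number of poles of $\gamma(s)$ in $[-T,0)$, counted with multiplicity, by $T\sum_j\lambda_j + O(1) = T + O(1)$, while the hypothesis supplies at least $T - O(1)$ distinct integer poles in the same range. Comparing these, all but $O(1)$ of the poles must be simple and integral, and distinct factors cannot systematically share a pole. Each $\Gamma(\lambda_j s + \mu_j)$ contributes an arithmetic progression of simple poles with common difference $1/\lambda_j$, and if $1/\lambda_j$ were irrational or even merely non-integral, then at most finitely many terms of this AP could be integers; hence $d_j := 1/\lambda_j \in \Z_{>0}$. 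The same reasoning applied to imaginary parts forces $\mu_j \in \R$, and the requirement that the AP be eventually integer forces $d_j\mu_j \in \Z$; combined with $\Re(\mu_j) > -\lambda_j/2$, which gives $d_j\mu_j > -\tfrac12$, we conclude $d_j\mu_j \in \Z_{\geq 0}$. Finally, any systematic overlap between two APs $r_i \bmod d_i$ and $r_j \bmod d_j$ would contribute $\gg T$ excess multiplicity, contradicting the tight match of bounds, so the $r_j := d_j\mu_j \bmod d_j$ form an exact cover of $\Z$ with $\sum_j 1/d_j = 1$.

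Step two: collapsing the product. Writing $\mu_j = q_j + r_j/d_j$ with $q_j \in \Z_{\geq 0}$ and $r_j \in \{0,\ldots,d_j-1\}$, the functional equation of $\Gamma$ yields
\[
\Gamma(s/d_j + \mu_j) = \bigl(\tfrac{s+r_j}{d_j}\bigr)_{q_j}\, \Gamma\bigl(\tfrac{s+r_j}{d_j}\bigr),
\]
where $(x)_n := x(x+1)\cdots(x+n-1)$ is a polynomial in $s$ of degree $q_j$ with roots $-r_j, -r_j - d_j, \ldots, -r_j - (q_j-1)d_j$; by the exact cover these roots are pairwise distinct across all $j$. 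Setting $D := \lcm(d_1,\ldots,d_r)$ and $e_j := D/d_j$, the multiplication formula
\[
\Gamma(e_j w) = (2\pi)^{(1-e_j)/2}\, e_j^{\,e_j w - 1/2}\, \prod_{k=0}^{e_j-1}\Gamma(w + k/e_j)
\]
applied with $w = (s+r_j)/D$ rewrites each $\Gamma((s+r_j)/d_j)$ as an elementary factor of the form $c_j \eta_j^{\,s}$ times $\prod_{k=0}^{e_j-1}\Gamma((s+r_j+kd_j)/D)$. Since the exact cover condition is equivalent to $\{r_j + kd_j : 1 \leq j \leq r,\ 0 \leq k < e_j\}$ being a complete residue system modulo $D$, the product over $j$ reconstitutes $\prod_{\ell=0}^{D-1}\Gamma(s/D + \ell/D)$, which a second application of the multiplication formula evaluates as a positive constant times $D^{-s}\Gamma(s)$.

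Collecting all of the polynomial factors, constants, and $H^s$-type exponentials coming from $Q^s$, the $e_j$'s and $D$, and using $\Gamma(s) = \tfrac12(2\pi)^s\Gamma_\C(s)$ to switch to $\Gamma_\C$, produces the required factorization $\gamma(s) = cP(s)H^s\Gamma_\C(s)$ with $c, H \in \R_{>0}$ and $P$ a monic polynomial (after normalization by the product of the $d_j^{q_j}$'s) whose roots are distinct non-positive integers. I expect the main obstacle to be the careful execution of step one: from the raw asymptotic pole counts one must extract both the exact cover structure and the arithmetic constraints on each $(\lambda_j, \mu_j)$ without slack, by exploiting the tight matching between the with-multiplicity upper bound $T + O(1)$ and the distinct-integer-pole lower bound $T - O(1)$.
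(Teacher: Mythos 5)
Your proposal follows the same two-step strategy as the paper: refine the pole count to force each $\lambda_j$ to be the reciprocal of a positive integer and each $d_j\mu_j$ to be a non-negative integer, then collapse the product to $\Gamma_\C(s)$ via the Gauss multiplication formula with modulus $\lcm(d_1,\dots,d_r)$. The paper extracts the polynomial factor at the end (by reducing the shifts $\nu_j'$ modulo $q$ after the multiplication formula) while you extract it at the start (by splitting $d_j\mu_j = q_jd_j + r_j$ before invoking multiplication), but this is a cosmetic reordering. The paper also derives the constraint $\lambda_j=1/q_j$ by applying Lemma~\ref{classifying gamma factors} to an auxiliary $\tilde\gamma$ with one factor replaced, whereas you re-run the density argument directly; both are legitimate.

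One step of your Step~1 is misstated. You assert that if $1/\lambda_j$ is non-integral then ``at most finitely many terms of this AP could be integers.'' That is correct when $1/\lambda_j$ is irrational, but false for a non-integral rational: with $\lambda_j = 2/3$ and $\mu_j=0$, the poles $s = -3n/2$ include all multiples of $3$, which are infinitely many integers. The conclusion $d_j\in\Z_{>0}$ is still right, but for the reason you state immediately before: the with-multiplicity pole count $T\sum\lambda_j + O(1) = T+O(1)$ matches the $T-O(1)$ distinct integer poles so tightly that all but $O(1)$ poles of $\gamma$ in $[-T,0)$ must be simple and integral. If $1/\lambda_j = q_j/a_j$ in lowest terms with $a_j>1$, the $j$-th factor contributes poles of density $\lambda_j = a_j/q_j$ of which only density $1/q_j$ are integers, so a positive proportion $(a_j-1)/q_j$ of its $\asymp T$ poles are non-integral, contradicting the $O(1)$ bound. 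With that correction the argument is sound and matches the paper in all essentials.
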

\begin{proof}
Recall that $\gamma(s)=Q^s\prod_{j=1}^r\Gamma(\lambda_js+\mu_j)$. By Lemma~\ref{classifying gamma factors} each factor in the product must contribute infinitely many poles in the negative integers; in particular $\lambda_j,\mu_j\in\mathbb{Q}$ for each $j$.
Let $\lambda_j=a_j/q_j$ in lowest terms. If $a_j>1$ then we can consider
$\tilde{\gamma}(s)=\Gamma(\frac{s-m_j}{q_j})
\prod_{i\ne j}\Gamma(\lambda_i s + \mu_i)$,  where $m_j$ is the negative integral pole of $\Gamma(\lambda_j s + \mu_j)$ of smallest absolute value. 
Then $\tilde{\gamma}(s)$ also has poles at all but finitely many negative integers, contradicting Lemma~\ref{classifying gamma factors}.

Hence we have $a_j=1$, so $\gamma(s)=Q^s\prod_{j=1}^r\Gamma(\frac{s+\nu_j }{q_j})$, where $\nu_j=\mu_jq_j\in\mathbb{Z}_{\ge0}$. Let $q=\lcm\{q_1,\ldots,q_j\}$ and write $\Gamma(\frac{s+\nu_j }{q_j})=\Gamma(\frac{q}{q_j}\cdot\frac{s+\nu_j}{q})$. By the Gauss multiplication formula, we have 
\begin{equation*}
    \gamma(s)=\bigl(\sqrt{2\pi}\bigr)^{r-n}Q^s\prod_{j=1}^r\left(\frac{q}{q_j}    \right)^{\frac{s+\nu_j}{q_j}-\frac12}\prod_{i=0}^{q/q_j-1}
    \Gamma\!\left(\frac{s+\nu_j+iq_j}{q}\right),
\end{equation*}
which we can rewrite in the form
$
cH^s\prod_{j=1}^q\Gamma\!\left(\frac{s+\nu_j'}{q}\right),
$
for some $c,H\in\R_{>0}$ and $\nu_j'\in\Z_{\ge0}$. Moreover, the $\nu_j'$ must run through a complete set of representatives for the residue classes mod $q$. Replacing each $\nu_j'$ by its mod $q$ reduction $\nu_j''\in\{0,\ldots,q-1\}$ and using the recurrence formula to relate $\Gamma(\frac{s+\nu_j'}{q})$ and $\Gamma(\frac{s+\nu_j''}{q})$, we get
$
c'P(s)H^s\prod_{j=1}^q\Gamma\!\left(\frac{s+\nu_j''}{q}\right),
$
with $P$ as described in the statement of the lemma. Finally, applying the Gauss multiplication formula once more and inserting a factor of $2(2\pi)^{-s}$, we arrive at the claimed form.
\end{proof}

The final lemma that we need is a special case of \cite[Lemma~4.1]{m21}. 
\begin{lemma}\label{lem:Ramanujansum_series}
Let $r_q(n)=\sum_{\substack{a\bmod q\\(a,q)=1}}e(na/q)$ be the Ramanujan sum. Then, for $n,N\ge\Z_{>0}$ and $\Re(s)>1$, we have
\[
\sum_{\substack{q\ge1\\N\mid q}} \frac{r_q(n)}{q^{2s}}
= \begin{cases} 
\frac{\sigma_{1-2s}(n; N)}{\zeta^{(N)}(2s)} & \text{ when } n\neq 0, \\
N^{1-2s} \prod_{p\mid N} (1-p^{-1}) \frac{\zeta(2s-1)}{\zeta^{(N)}(2s)} & \text{ when } n=0. 
\end{cases} 
\]
Here, when $\frac{N}{\prod_{p\mid N}p}\mid n$, 
\begin{equation}\label{e:sigma_nN}
\sigma_{s}(n; N)=
\prod_{\substack{p\mid n\\p\nmid N}}
\frac{p^{(\ord_p(n)+1)s}-1}{p^{s}-1}
\cdot\prod_{p\mid N}
\left(\frac{(1-p^{s-1})p^{(\ord_p(n)+1)s}-(1-p^{-1})p^{\ord_p(N)s}}{p^{s}-1}
\right).
\end{equation}
Otherwise $\sigma_{s}(n; N)=0$.
\end{lemma}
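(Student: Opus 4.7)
The plan is to evaluate the sum as an Euler product over primes. Since $r_q(n)$ is multiplicative in $q$ and the constraint $N\mid q$ decouples prime-by-prime into $\ord_p(q)\geq v_p$, where $v_p:=\ord_p(N)$, absolute convergence for $\Re(s)>1$ yields
\[
\sum_{\substack{q\ge1\\N\mid q}} \frac{r_q(n)}{q^{2s}} = \prod_p S_p, \qquad S_p := \sum_{k\geq v_p}\frac{r_{p^k}(n)}{p^{2sk}}.
\]
It then suffices to show that each local factor $S_p$ agrees with the $p$th Euler factor of the right-hand side.

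To evaluate $S_p$, I would use the classical prime-power formula $r_{p^k}(n)=\phi(p^k)$ when $p^k\mid n$, $-p^{k-1}$ when $p^{k-1}\|n$, and $0$ otherwise (with the convention $\ord_p(0)=\infty$). When $p\nmid N$ (so $v_p=0$), this reproduces the standard formula: for $n=0$ one obtains $(1-p^{-2s})/(1-p^{1-2s})$, the local factor of $\zeta(2s-1)/\zeta(2s)$; for $n\neq 0$ it gives $(1-p^{-2s})(p^{(\ord_p(n)+1)(1-2s)}-1)/(p^{1-2s}-1)$, the local factor of $\sigma_{1-2s}(n)/\zeta(2s)$. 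When $p\mid N$ the sum starts at $k=v_p\geq1$; for $n=0$ a geometric-series calculation gives $S_p=(1-p^{-1})\,p^{v_p(1-2s)}/(1-p^{1-2s})$, while for $n\neq 0$ I would split on $m:=\ord_p(n)$ relative to $v_p$. If $m<v_p-1$, then $r_{p^k}(n)=0$ for all $k\geq v_p$ and $S_p=0$, matching the vanishing of $\sigma_{1-2s}(n;N)$ forced by the condition $N/\prod_{p\mid N}p\mid n$; otherwise,
\[
S_p = \frac{(1-p^{-2s})\,p^{(m+1)(1-2s)}-(1-p^{-1})\,p^{v_p(1-2s)}}{p^{1-2s}-1},
\]
which is precisely the $p$th factor of \eqref{e:sigma_nN} with $s$ replaced by $1-2s$.

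To finish, I would multiply local factors over all primes: use $\zeta^{(N)}(2s)^{-1}=\prod_{p\nmid N}(1-p^{-2s})$ to absorb the unramified Euler factors, and $\prod_{p\mid N} p^{v_p(1-2s)} = N^{1-2s}$ to collect the ramified powers of $p$. The $n=0$ and $n\neq 0$ cases then assemble exactly into the claimed formulas. The main obstacle in executing this plan is the bookkeeping: one must carefully distinguish the sub-cases $m\geq v_p$, $m=v_p-1$, and $m<v_p-1$ at each $p\mid N$, verify the geometric-series telescoping in the critical case (where the leading and trailing terms combine to give the numerator above), and check the boundary behavior where $m=v_p-1$ produces $S_p=-p^{v_p-1-2sv_p}$, consistent with the closed form. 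The ideas are entirely elementary, which is why the result appears as a special case of \cite[Lemma~4.1]{m21}.
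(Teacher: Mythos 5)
The paper itself gives no proof of this lemma; it simply cites \cite[Lemma~4.1]{m21}. Your Euler-product argument is a correct, self-contained derivation, and it is almost certainly the same elementary route taken in the cited reference. The key structural observations are all right: $r_q(n)$ is multiplicative in $q$, the constraint $N\mid q$ decouples into $\ord_p(q)\ge v_p$ at each prime, and absolute convergence for $\Re(s)>1$ justifies the infinite product. Your local computations also check out. Writing $x=p^{1-2s}$ and $m=\ord_p(n)$, for $p\mid N$ and $m\ge v_p$ one finds
\[
S_p=(1-p^{-1})\frac{x^{v_p}-x^{m+1}}{1-x}-p^{-1}x^{m+1}
=\frac{(1-p^{-2s})x^{m+1}-(1-p^{-1})x^{v_p}}{x-1},
\]
which is exactly the $p$-factor of $\sigma_{1-2s}(n;N)$; at the boundary $m=v_p-1$ the same expression collapses to $-p^{-1}x^{v_p}=-p^{v_p-1-2sv_p}$, matching the single surviving term $r_{p^{v_p}}(n)=-p^{v_p-1}$; and for $m<v_p-1$ the sum is empty of nonzero terms, matching the divisibility condition $\frac{N}{\prod_{p\mid N}p}\mid n$. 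The reassembly via $1/\zeta^{(N)}(2s)=\prod_{p\nmid N}(1-p^{-2s})$ and $\prod_{p\mid N}p^{v_p(1-2s)}=N^{1-2s}$ is likewise correct in both the $n\ne0$ and $n=0$ cases. In short, your proposal supplies a complete proof where the paper defers to a reference.
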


\section{Proof of Theorem~\ref{Main theorem}}
Let $S_k(N,\chi)$ denote the space of holomorphic modular forms of weight $k$, level $N$ and nebentypus character $\chi$, and let $H_k(N,\chi)$ be an orthonormal basis for $S_k(N,\chi)$.  
Each $g\in H_k(N,\chi)$ has a Fourier expansion of the form  
\[
g(z) = \sum_{n=1}^\infty \rho_g(n)n^{\frac{k-1}{2}}e(nz)
\]
for some coefficients $\rho_g(n)\in\C$.
An application of the Petersson formula gives the following formula found in  \cite[Corollary~14.23]{ik04}: for positive integers $n$ and $m$, we have 
\begin{equation}\label{e:Petersson}
  \frac{\Gamma(k-1)}{( 4 \pi)^{k-1}}\sum_{g\in H_k(N,\chi)}\rho_g(n)\overline{\rho_g(m)} 
= \delta_{n=m} + 2\pi i^{-k} \sum_{\substack{q\geq 1\\ N\mid q}} \frac{S_{\chi}(m, n; q)}{q} J_{k-1}\!\left(\frac{4\pi \sqrt{mn}}{q}\right), 
\end{equation}
 where 
\[
S_{\chi}(m, n; q) = \sum_{\substack{a\bmod{q}\\ \gcd(a, q)=1}} \chi(a) e\!\left(\frac{ma+n\bar{a}}{q}\right)
\]
is the twisted Kloosterman sum 
and $J_{k-1}(y)$ is the classical $J$-Bessel function.

Fix a choice of data $\{f_n\}_{n\ge1}$, $N$, $\chi$, $\omega$, and $\gamma(s)$ satisfying the hypotheses of Theorem~\ref{Main theorem}.
Let $\epsilon \in\{0,1 \}$ be such that $\chi(-1)= (-1)^\epsilon$. For $k\ge4$, $\Re(s)\in(\frac12,\frac{k-1}{2})$, $x>0$, and $\sigma_1\in(\frac{1-k}{2},-\Re(s))$, define 
\begin{equation}{\label{Main integral}}
    F_k(s,x)=\frac1{2\pi i}\int_{\Re(u)=\sigma_1}
\frac{\Gamma_\C(u+\frac{k-1}{2})\gamma(1-s-u)}{\Gamma_\C(-u+\frac{k+1}{2})\gamma(s+u)}
x^u\,du.
\end{equation}
By Stirling's formula, the integral converges absolutely since $\Re(s)>\frac12$.
Note also that our choice of $\sigma_1$ ensures that the contour $\Re(u)=\sigma_1$ 
separates the poles of $\Gamma_\C(u+\frac{k-1}{2})$ and $\gamma(1-s-u)$.

The proof will be split into two cases depending on whether $F_k(1,1)$ is nonzero for some $k$ or not.
\begin{prop}\label{prop1}
Suppose that $F_k(1,1)\ne0$ for some $k\ge4$ with $k\equiv\epsilon\pmod{2}$. Then $f$ as defined in Theorem~\ref{Main theorem} is in $S_k(N,\chi)$.
\end{prop}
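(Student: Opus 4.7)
Plan. The strategy is to realise each Fourier coefficient $f_n$ as a finite linear combination of the $\rho_g(n)$ for $g\in H_k(N,\chi)$, by evaluating a single regularised sum $\mathcal{E}_n(s)$ in two different ways: once via the Petersson trace formula \eqref{e:Petersson}, and once via the functional equation \eqref{functional equation}. The kernel $F_k(s,x)$ from \eqref{Main integral} is precisely what emerges when the two are combined.

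Fix $n\geq 1$, and for a parameter $s$ with $\Re(s)$ large, introduce
\[
\mathcal{E}_n(s) := \sum_{\substack{q\geq 1\\ N\mid q}}\frac{1}{q^{2s-1}}\sum_{\substack{a\bmod q\\ (a,q)=1}}\chi(a)\,e(n\bar{a}/q)\cdot\frac{1}{2\pi i}\int_{(\sigma)}\frac{\Gamma_\C\!\left(u+\tfrac{k-1}{2}\right)}{\Gamma_\C\!\left(-u+\tfrac{k+1}{2}\right)}\Lambda_f(s+u,a/q)\,q^{2u}n^{-u}\,du.
\]
The $\Gamma_\C$-ratio in the integrand is the Mellin--Barnes kernel for $J_{k-1}$, so substituting the Dirichlet series for $\Lambda_f(s+u,a/q)$, closing up the $a$-sum into the twisted Kloosterman sum $S_\chi(m,n;q)=\sum_{(a,q)=1}\chi(a)e((ma+n\bar{a})/q)$, and carrying out the inner $u$-integral matches $\mathcal{E}_n(s)$ with a $q$-regularised form of the off-diagonal side of \eqref{e:Petersson} tested against $\{f_m\}$. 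Applying \eqref{e:Petersson} then rewrites $\mathcal{E}_n(s)$ as $\frac{\Gamma(k-1)}{(4\pi)^{k-1}}\sum_{g\in H_k(N,\chi)}\rho_g(n)\,\overline{c_g(s)}$ (with $c_g(s)$ arising as regularised inner products of $f$ against the basis), minus a diagonal contribution linear in $f_n$.

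For the second evaluation I would apply \eqref{functional equation} inside the integrand, replacing $\Lambda_f(s+u,a/q)$ by $\omega\chi(\bar{a})q^{1-2(s+u)}\Lambda_f(1-s-u,-\bar{a}/q)$; substituting $b=\bar{a}$, using $\chi(a)\chi(\bar{a})=1$, expanding $\Lambda_f(1-s-u,-b/q)$ as a Dirichlet series in $f_m$, and collapsing the remaining $b$-sum into the Ramanujan sum $r_q(m-n)$ reduces the $q$-sum to one handled by Lemma~\ref{lem:Ramanujansum_series}, producing arithmetic weights $\sigma_{1-2s}(m-n;N)/\zeta^{(N)}(2s)$. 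After bookkeeping of the powers of $q$ and the gamma factors, the $u$-integrand assembles into precisely the integrand of $F_k(s,m/n)$. The $m=n$ term, whose arithmetic weight has a simple pole at $s=1$ coming from $\zeta(2s-1)$, contributes a nonzero multiple of $f_n\,F_k(1,1)$ upon taking residues; the $m\ne n$ terms are regular at $s=1$ and are shown to match back into a finite linear combination of the $\rho_g(n)$ by a contour-shift argument.

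Equating the two evaluations of $\mathcal{E}_n(s)$ and isolating the residue at $s=1$ expresses $f_n$ as a finite linear combination of the $\rho_g(n)$ with coefficients independent of $n$, provided the coefficient $F_k(1,1)$ of $f_n$ is nonzero; this is precisely the hypothesis of the proposition. Hence $f\in S_k(N,\chi)$. The principal technical obstacles are (i) justifying the interchanges of summation and integration and the contour shifts, which relies on the finite-order hypothesis for $\Lambda_f(s,a/q)$ together with Stirling's asymptotics, (ii) identifying the $m\ne n$ contributions with spectral data on the Petersson side, and (iii) the careful handling of the pole of $\mathcal{E}_n(s)$ at $s=1$ from the $m=n$ term. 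All of these use in an essential way the explicit Selberg-class form of $\gamma(s)$ isolated in Lemma~\ref{no pole lemma}, which pins down the pole structure of the ratio $\gamma(1-s-u)/\gamma(s+u)$ appearing in $F_k(s,x)$.
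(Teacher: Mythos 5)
Your plan is, at the level of strategy, exactly the proof the paper gives: once the diagonal term is subtracted, your $\mathcal{E}_n(s)$ plays the role of the paper's $K_{0,n}(s,f,\chi)$, and the chain Petersson formula $\to$ Mellin--Barnes kernel for $J_{k-1}$ $\to$ open the Kloosterman sum into additive twists $\to$ apply \eqref{functional equation} $\to$ collapse the $a$-sum to $r_q(n-m)$ $\to$ invoke Lemma~\ref{lem:Ramanujansum_series} $\to$ extract the residue at $s=1$ from $\zeta(2s-1)$ is precisely what is carried out, finishing with linear independence of the vectors $(\rho_g(n_1),\ldots,\rho_g(n_d))$ and the nonvanishing of $F_k(1,1)$. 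A few details need correcting, though. First, the object that appears when you open the Kloosterman sum is the finite twist $L_f(s+u,a/q)$, not the completed $\Lambda_f(s+u,a/q)$; the archimedean factor $\gamma(s+u)$ enters only when you apply the functional equation, and that is exactly how the ratio $\gamma(1-s-u)/\gamma(s+u)$ inside $F_k(s,x)$ arises. Relatedly, your $q$-power is off: it should be $q^{2u-1}$ (with an overall prefactor $i^{-k}\zeta^{(N)}(2s)$), not $q^{2u+1-2s}$ -- as written, carrying out the inner $u$-integral does not reproduce the factor $q^{-1}$ that appears in the off-diagonal side of \eqref{e:Petersson}. Second, Lemma~\ref{no pole lemma} is not used in this proposition and could not be, since its hypothesis (poles at all but finitely many negative integers) is never assumed here; the contour shift and absolute convergence rest solely on the general hypotheses of Theorem~\ref{Main theorem} via Stirling's estimate
\[
\frac{\Gamma_\C(u+\frac{k-1}{2})}{\Gamma_\C(-u+\frac{k+1}{2})}\,\frac{\gamma(1-s-u)}{\gamma(s+u)}\ll|u|^{-2\Re(s)}\qquad\text{on }\Re(u)=\sigma_1,
\]
valid for any $\gamma(s)=Q^s\prod_j\Gamma(\lambda_js+\mu_j)$ with $\sum_j\lambda_j=1$. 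Lemma~\ref{no pole lemma} is reserved for the complementary case treated in Proposition~\ref{prop2}.
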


\begin{proof}
Fix $k$ as in the hypotheses. For $n\in\Z_{>0}$ and $\Re(s)>1$, define 
\begin{equation}\label{e:Kn_def}
K_n(s,f,\chi) = \zeta^{(N)}(2s)\sum_{m=1}^\infty\frac{f_m}{m^s} \frac{ \Gamma(k-1)}{(4\pi)^{k-1}}\sum_{g\in H_k(N,\chi)}
\rho_g(n)\overline{\rho_g(m)} 
= \frac{ \Gamma(k-1)}{( 4 \pi)^{k-1}} \sum_{g\in H_k(N,\chi)}\rho_g(n)L(s,f\times\bar{g}), 
\end{equation}
where 
\[
L(s, f\times\bar{g}) = \zeta^{(N)}(2s) \sum_{m=1}^\infty \frac{f_m\overline{\rho_g(m)}}{m^s}
\quad\text{and}\quad\zeta^{(N)}(s)= \zeta(s)\prod_{p\mid N} \bigl(1 - p^{-s}\bigr).
\]
Note that the series for $L(s,f\times\bar{g})$ converges absolutely for $\Re(s)>1$ thanks to the Ramanujan bound $\rho_g(m)\ll_\varepsilon m^\varepsilon$.

Now consider $s\in\C$ with $\Re(s)\in(\frac54,\frac{k-1}{2})$.
Applying \eqref{e:Petersson} to  \eqref{e:Kn_def}, we get
\begin{align*}
K_{n}(s, f, \chi) &=   \zeta^{(N)}(2s)  \sum_{m=1}^\infty \frac{f_m}{m^{s}} \bigg\{ \delta_{n=m} + 2\pi i^{-k} \sum_{\substack{q\geq 1\\ N\mid q}} \frac{S_{\chi}(m, n; q)}{q} J_{k-1}\!\left(\frac{4\pi \sqrt{mn}}{q}\right)\bigg\}
\\ &=\zeta^{(N)}(2s)  \frac{f_n}{n^s}
+ 2\pi i^{-k}  \zeta^{(N)}(2s) 
\sum_{\substack{q\geq 1\\ N\mid q}} \frac{1}{q} \sum_{m=1}^\infty \frac{f_m S_{\chi}(m, n; q)}{m^s} J_{k-1}\!\left(\frac{4\pi \sqrt{mn}}{q}\right). 
\end{align*}
Here the interchange of the sums is justified for $\Re(s)>\frac54$ by the estimates
\[
J_{k-1}(y)\ll\min\bigl\{y^{k-1},y^{-1/2}\bigr\}
\quad\text{and}\quad
S_\chi(m,n;q)\ll_{n,\varepsilon}q^{1/2+\varepsilon}.
\]

Let 
\[
K_{0,n}(s,f,\chi) = K_n(s,f,\chi)-f_nn^{-s}\zeta^{(N)}(2s).
\]
By the Mellin--Barnes type integral representation \cite[(6.422)]{gr15}, we have
\[
2\pi J_{k-1}(4\pi y) = \frac{1}{2\pi i} \int_{\Re(u)=\sigma_0} \frac{\Gamma_\C(u+\frac{k-1}{2})}{\Gamma_\C(-u+\frac{k+1}{2})}y^{-2u}\,du,
\]
for any choice of $\sigma_0\in(\frac{1-k}{2},0)$.
Applying this with $\sigma_0\in(1-\Re(s),0)$, we may change the order of sum and integral, to obtain
\[
K_{0,n}(s, f , \chi) = i^{-k}  \zeta^{(N)}(2s)
\sum_{\substack{q\geq 1\\ N\mid q}} \frac{1}{2\pi i} \int_{\Re(u)=\sigma_0} \frac{\Gamma_\C(u+\frac{k-1}{2})}{\Gamma_\C(-u+\frac{k+1}{2})}q^{2u-1} 
\sum_{m=1}^\infty \frac{f_m S_{\chi}(m, n; q)}{m^s}
(mn)^{-u}\,du.
\]
Opening up the Kloosterman sum, this becomes
\[
\sum_{\substack{q\geq 1\\ N\mid q}} \frac{1}{2\pi i} \int_{\Re(u)=\sigma_0} \frac{\Gamma_\C(u+\frac{k-1}{2})}{\Gamma_\C(-u+\frac{k+1}{2})}q^{2u-1}n^{-u} 
\sum_{\substack{a\bmod{q}\\ a\bar{a}\equiv 1\bmod{q}}} \chi(a) e(n\bar{a}/q) 
L_f\bigl(s+u,\tfrac{a}{q}\bigr)\,du.
\]

Next we shift the contour to $\Re(u)=\sigma_1\in(\frac{1-k}{2},-\Re(s))$,
so that $\Re(1-s-u)>1$, and apply the functional equation 
\begin{align*}
\sum_{\substack{a\bmod{q}\\ a\bar{a}\equiv 1\bmod{q}}}\chi(a)e(n\bar{a}/q) 
L_f\bigl(s+u,\tfrac{a}{q}\bigr)=\omega q^{1-2s - 2u}\frac{\gamma(1-s-u)}{\gamma(s+u)} \sum_{\substack{a\bmod{q}\\ a\bar{a}\equiv 1\bmod{q}}}    e(n\bar{a}/q)L_f\bigl(1-s-u,-\tfrac{\bar{a}}{q}\bigr),
\end{align*}
obtaining
\begin{align*}
K_{0,n}(s, f, \chi) =i^{-k}\omega 
\zeta^{(N)}(2s)
\sum_{\substack{q\geq 1\\ N\mid q}} \frac{1}{q^{2s}}
\frac{1}{2\pi i}&\int_{\Re(u)=\sigma_1} \frac{\Gamma_\C(u+\frac{k-1}{2})}{\Gamma_\C(-u+\frac{k+1}{2})} \frac{\gamma(1-s-u)}{\gamma(s+u)}n^{-u} \\
    &\cdot\sum_{\substack{a\bmod{q}\\ a\bar{a}\equiv 1\bmod{q}}} 
e(n\bar{a}/q)L_f\bigl(1-s-u,-\tfrac{\bar{a}}{q}\bigr)\,du  . 
\end{align*}
Note that the contour shift is justified by the fact that $\Lambda_f\bigl(s,\tfrac{a}{q}\bigr)$ has finite order and the estimates
\[
L_f\bigl(1-s-u,-\tfrac{\bar{a}}{q}\bigr)\ll1
\quad\text{and}\quad
\frac{\Gamma_\C(u+\frac{k-1}{2})}{\Gamma_\C(-u+\frac{k+1}{2})} \frac{\gamma(1-s-u)}{\gamma(s+u)}
\ll|u|^{-2\Re(s)}
\quad\text{for }\Re(u)=\sigma_1.
\]

The same estimates show that we may swap the order of sum and integral. We also expand $L_f\bigl(1-s-u,-\tfrac{\bar{a}}{q}\bigr)$ as a Dirichlet series, obtaining 
\[
\sum_{\substack{a\bmod{q}\\ a\bar{a}\equiv 1\bmod{q}}} 
e(n\bar{a}/q)L_f\bigl(1-s-u,-\tfrac{\bar{a}}{q}\bigr)
=\sum_{\substack{a\bmod{q}\\ a\bar{a}\equiv 1\bmod{q}}} 
e(n\bar{a}/q)\sum_{m=1}^\infty
\frac{f_me(-m\bar{a}/q)}{m^{1-s-u}}
= \sum_{m=1}^\infty\frac{f_mr_q(n-m)}{m^{1-s-u}},
\]
where $r_q$ is the Ramanujan sum. An application of Lemma~\ref{lem:Ramanujansum_series} leads to the following expression.
\begin{equation}\label{final expression}
\begin{aligned}
K_n(s, f&,\chi) = f_nn^{-s}\zeta^{(N)}(2s)\\
&+ i^{-k}\omega f_nn^{s-1}
\zeta(2s-1) N^{1-2s} \prod_{p\mid N}(1-p^{-1})
\cdot\int_{\Re(u)=\sigma_1} \frac{\Gamma_\C(u+\frac{k-1}{2})  }{\Gamma_\C(-u+\frac{k+1}{2})} \frac{\gamma(1-s-u)}{\gamma(s+u)} \, du \\
\\ &+ i^{-k} \omega 
\sum_{\substack{m\geq 1\\ m\neq n}} \frac{f_m\sigma_{1-2s}(n-m; N)}{m^{1-s}}
   \int_{\Re(u)=\sigma_1} \frac{\Gamma_\C(u+\frac{k-1}{2})  }{\Gamma_\C(-u+\frac{k+1}{2})} \frac{\gamma(1-s-u)}{\gamma(s+u)} \left( \frac{m}{n}    \right)^u \, du .
\end{aligned}
\end{equation}
It is straightforward to see that $\sigma_{1-2s}(r;N)\ll_{N,\varepsilon}|r|^\varepsilon$, uniformly for $r\ne1$ and $\Re(s)\ge\frac12$. Thus, for a fixed $\sigma_1$, both integrals and the sum over $m$ converge absolutely for $\frac12<\Re(s)<-\sigma_1$.
This establishes the meromorphic continuation of $K_n(s,f,\chi)$ to that region, and hence also of
$\sum_{g\in H_k(N,\chi)}\rho_g(n)L(s,f\times\bar{g})$, in view of \eqref{e:Kn_def}.

Since the $g$ form a basis for $S_k(N,\chi)$, we can choose a finite set $\{n_i:i=1,\ldots,d\}$, where $d=\dim S_k(N,\chi)$, such that the vectors $(\rho_g(n_1),\ldots,\rho_g(n_d))$ are linearly independent. Taking a suitable linear combination of \eqref{final expression} for $n=n_i$, we deduce the meromorphic continuation of $L(s,f\times\bar{g})$ to $\Re(s)>\frac12$ for each individual $g$. The only possible pole is at $s=1$, and taking residues we see that
\[
\frac{\Gamma(k-1)}{(4\pi)^{k-1}}\sum_{g\in H_k(N, \chi)}
\rho_g(n)\Res_{s=1}L(s,f\times\bar{g}) = \tfrac12i^{-k}\omega N^{-1} \prod_{p\mid N}(1-p^{-1})\cdot F_k(1,1)f_n. 
\]
Since $F_k(1,1)\ne0$, we see that there exist $x_g\in\C$ such that
$f_n = \sum_{g\in H_k(N, \chi)} x_g\rho_g(n)$
for any positive integer $n$. Since $S_k(N,\chi)$ is a vector space, we have proved the claim. 
\end{proof}
We have taken care of the case where the integral \eqref{Main integral} is nonzero at $s=1$, $x=1$ for some $k\ge4$. If this is not the case, we use the following proposition. 
\begin{prop}\label{prop2}
If $F_k(1,1)=0$ for all $ k \ge4$ with $k\equiv\epsilon\pmod{2}$, then $\gamma(s)$ is of the form $cH^s\Gamma_\C(s+\frac{\ell-1}{2})$, where $c,H\in\R_{>0}$, and $\ell\in\{1,2,3\}$ with $\ell\equiv\epsilon\pmod2$.
\end{prop}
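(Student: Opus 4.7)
The plan is to interpret $F_k(1,1)$ via a Mellin convolution and exploit Weber--Schafheitlin orthogonality to reverse--engineer the form of $\gamma$.

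Using the Mellin--Barnes formula $2\pi J_{k-1}(4\pi y) = \frac{1}{2\pi i}\int \frac{\Gamma_\C(u+(k-1)/2)}{\Gamma_\C(-u+(k+1)/2)} y^{-2u}\,du$ from the proof of Proposition~\ref{prop1} together with Mellin--Parseval, one can rewrite
\[
F_k(1,1) = C\int_0^\infty J_{k-1}(r)\,\phi(r)\,\frac{dr}{r}
\]
for a positive constant $C$, where $\phi(r)$ is a scaled inverse Mellin transform of $\gamma(-u)/\gamma(1+u)$. The key classical identity is the Weber--Schafheitlin integral
\[
\int_0^\infty J_\mu(r)J_\nu(r)\,\frac{dr}{r} = \frac{2\sin(\pi(\mu-\nu)/2)}{\pi(\mu^2-\nu^2)}\qquad(\mu+\nu>0,\ \mu\neq\nu),
\]
which vanishes exactly when $\mu\equiv\nu\pmod 2$ and $\mu\neq\nu$. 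When $\gamma(s)=cH^s\Gamma_\C(s+(\ell-1)/2)$, the standard Mellin transform $\int_0^\infty J_{\ell-1}(r)r^{s-1}\,dr = 2^{s-1}\Gamma((\ell-1+s)/2)/\Gamma((\ell+1-s)/2)$ yields $\phi(r) \propto J_{\ell-1}(r/H)$, and Weber--Schafheitlin forces $F_k(1,1)=0$ for $k\equiv\ell\pmod 2$, $k\neq\ell$; the bound $\ell\leq 3$ is what keeps $k=\ell$ outside the hypothesis range $k\geq 4$.

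For the converse, I would shift the contour in the Mellin--Barnes integral for $F_k(1,1)$ to the right, past the poles $u_{j,m}=(\mu_j+m)/\lambda_j$ of $\gamma(-u)$. The boundary term vanishes by Stirling together with the finite--order hypothesis on $\Lambda_f$. Writing $A_k(u):=\Gamma_\C(u+(k-1)/2)/\Gamma_\C(-u+(k+1)/2)$, the resulting residue expansion reads
\[
F_k(1,1) = -\sum_{j,m} C_{j,m}\,A_k(u_{j,m})
\]
with $k$--independent constants $C_{j,m}$. Because $A_k(u)$ extends to an entire function of $k$ of controlled exponential growth (via Stirling), Carlson's theorem upgrades the hypothesis that this sum vanishes on an arithmetic progression to vanishing identically in complex $k$. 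Using the recurrence $A_k(u+1) = A_k(u)\cdot((k-1)^2/4-u^2)/(2\pi)^2$, one can then convert the identity into a polynomial--type relation in the variable $y=((k-1)/2)^2$, and a Vandermonde argument on the distinct pole locations $u_{j,m}$ forces all but one pole sequence of $\gamma(-u)/\gamma(1+u)$ to contribute trivially. The surviving sequence must match the pole structure of $\Gamma_\C((\ell-1)/2-u)/\Gamma_\C((\ell+1)/2+u)$ for some $\ell\in\{1,2,3\}$, and Lemma~\ref{no pole lemma} (together with a parallel half--integer--pole analysis handling the case $\ell=2$) then pins down $\gamma(s) = cH^s\Gamma_\C(s+(\ell-1)/2)$. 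The parity constraint $\ell\equiv\epsilon\pmod 2$ arises from the sine--vanishing in Weber--Schafheitlin.

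The main obstacle is the Carlson/Vandermonde step: verifying rigorously that the functions $k\mapsto A_k(u_{j,m})$ for distinct $(j,m)$ are linearly independent enough that only the very constrained single--pole configurations can satisfy the infinite system of vanishing conditions. This requires careful control over the growth of the Gamma ratios in $k$ and handling possible coincidences in pole locations coming from the general form $\gamma(s) = Q^s\prod_j\Gamma(\lambda_j s+\mu_j)$, ensuring that the argument genuinely restricts $\gamma$ to the one--parameter family $cH^s\Gamma_\C(s+(\ell-1)/2)$.
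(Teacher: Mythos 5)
Your proposal diverges from the paper's proof in a genuinely interesting way, but it has a fatal gap in the converse direction, and the gap is not merely technical: the Carlson step cannot work as stated.

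The forward check is fine: when $\gamma(s)=cH^s\Gamma_\C(s+\frac{\ell-1}{2})$, the kernel $\phi$ is $\propto J_{\ell-1}$, and Weber--Schafheitlin gives $F_k(1,1)\propto \frac{\sin(\pi(k-\ell)/2)}{(k-1)^2-(\ell-1)^2}$, which vanishes precisely for $k\equiv\ell\pmod 2$ with $k\ne\ell$. This explains why $\ell$ survives among $\{1,2,3\}$. But this computation only verifies that the target family yields vanishing; the proposition needs the converse, and there your outline breaks. The Weber--Schafheitlin expression exposes the obstruction: as a function of $k$, $F_k(1,1)$ is a constant times $\sin(\pi(k-\ell)/2)/((k-1)^2-(\ell-1)^2)$, which has exponential type exactly $\pi/2$ and vanishes on an arithmetic progression of spacing $2$. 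This is precisely the borderline case where Carlson's theorem is false; $\sin(\pi z/2)$ vanishes on $2\mathbb{Z}$ without being identically zero. So any Carlson-type argument with vanishing only on $k\equiv\epsilon\pmod 2$, $k\ge 4$ must fail, because the very family you are trying to single out realizes the extremal exponential type. In addition, the residue expansion $F_k(1,1)=-\sum_{j,m}C_{j,m}A_k(u_{j,m})$ requires shifting the contour past infinitely many poles of $\gamma(-u)$, and the termwise growth $A_k(u_{j,m})\sim(k/2)^{2u_{j,m}-1}$ as $u_{j,m}\to\infty$ makes absolute convergence of this series far from automatic; you would at minimum need to justify the shift and the rearrangement, neither of which is routine in the generality $\gamma(s)=Q^s\prod_j\Gamma(\lambda_j s+\mu_j)$.

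The paper avoids both problems by converting $F_k(1,1)$ not into a Weber--Schafheitlin integral on $(0,\infty)$ but into an integral $\int_0^1 f_{k-1}(y)g(y)\,dy$ against a function $f_{k-1}$ \emph{supported in $(0,1)$}, using the compactly supported realization of the Bessel-type kernel $\widetilde f_n(s)=\frac{(-1)^{\lfloor n/2\rfloor}\Gamma_\C(s)}{2^s\Gamma_\C(\frac{s+n+1}{2})\Gamma_\C(\frac{s-n+1}{2})}$. The substitution $y=|\sin(\theta/2)|$ turns the vanishing hypotheses into classical Fourier orthogonality (when $\epsilon=1$) or orthogonality against Chebyshev polynomials of the fourth kind (when $\epsilon=0$), for which completeness is immediate. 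The finitely many missing $k<4$ translate into the finitely many low Fourier/Chebyshev modes, which is exactly how the low-degree freedom $\ell\in\{1,2,3\}$ survives; the growth bound $g(y)=O(y^{5/2})$ near $0$ then kills even that freedom in $g$ itself, giving $g\equiv 0$ on $(0,1)$, after which analyticity of the relevant Mellin ratio forces $\gamma$ to have poles at all (shifted) negative integers, and Lemma~\ref{no pole lemma} finishes. You should either adopt this finite-interval/completeness mechanism, or find a substitute for Carlson that exploits vanishing \emph{and} the specific Bessel-quotient structure of $F_k(1,1)$, since the naive Carlson route is blocked at the boundary type $\pi/2$.
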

\begin{proof}
We replace $u$ by $u/2$ in the definition of $F_k(1,1)$ and shift the contour to $\Re(u)=-\frac52$, which is permissible for all $k\ge4$. Our hypothesis then implies that
\begin{equation}\label{stuff}
\frac1{2\pi i}\int_{\Re(u)=-\frac52}
\frac{\Gamma_\C(\frac{k-1+u}{2})\gamma(-u/2)}{2 \Gamma_\C(\frac{k+1-u}{2})\gamma(1+u/2)}\,du
=0
\quad\text{for all }k\ge4\text{ with }k\equiv\epsilon\;(\text{mod }2).
\end{equation}
For $n\ge0$, define
\[
f_n(y)=\frac{\mathbf{1}_{(0,1)}(y)}{\sqrt{1-y^2}}\begin{cases}
\cos(n\arcsin{y})&\text{if }2\mid n,\\
\sin(n\arcsin{y})&\text{if }2\nmid n.
\end{cases}
\]
Using the formulas in \cite[\S3.631]{gr15}, we see that $f_n$ has Mellin transform
\[
\widetilde{f}_n(s)=
\int_0^\infty f_n(y)y^{s-1}\,dy
=\frac{(-1)^{\lfloor{n/2}\rfloor}\Gamma_\C(s)}
{2^s\Gamma_\C(\frac{s+n+1}{2})\Gamma_\C(\frac{s-n+1}{2})}
\quad\text{for }\Re(s)>0.
\]
For $k \equiv \epsilon \pmod{2}$ we have
\begin{align*}
\frac{\Gamma_\C(\frac{k-1+u}{2})\gamma(-u/2)}{2\Gamma_\C(\frac{k+1-u}{2})\gamma(1+u/2)}
&=\frac{\Gamma_\C(1-u)}{2^{1-u}\Gamma_\C(\frac{k+1-u}{2})\Gamma_\C(\frac{3-k-u}{2})}
\cdot\frac{2^{-u}\Gamma_\C(\frac{3-k-u}{2})\Gamma_\C(\frac{k-1+u}{2})}{\Gamma_\C(1-u)}
\frac{\gamma(-u/2)}{\gamma(1+u/2)}\\
&=\frac{(-1)^{\lfloor{(k-1)/2}\rfloor}\widetilde{f}_{k-1}(1-u)}{2^{u-1}\Gamma_\C(1-u)\sin(\frac{\pi}{2}(u+k-1))}
\frac{\gamma(-u/2)}{\gamma(1+u/2)}\\
&=\frac{\widetilde{f}_{k-1}(1-u)}{2^{u-1}\Gamma_\C(1-u)\sin(\frac{\pi}{2}(u+1-\epsilon))}
\frac{\gamma(-u/2)}{\gamma(1+u/2)}\\
&=\widetilde{f}_{k-1}(1-u)
\frac{\sqrt2\Gamma_\C(\frac{1-\epsilon+u}{2})}
{\Gamma_\C(\frac{2-\epsilon-u}{2})}
\frac{\gamma(-u/2)}{\gamma(1+u/2)}.
\end{align*}
Thus, \eqref{stuff} can be written as
\[
\frac1{2\pi i}\int_{\Re(u)=-\frac52}\widetilde{f}_{k-1}(1-u)\widetilde{g}(u)\,du=0,
\quad\text{where}\quad
\widetilde{g}(u)=
\frac{\sqrt{2}\Gamma_\C(\frac{1-\epsilon+u}{2})}
{\Gamma_\C(\frac{2-\epsilon-u}{2})}
\frac{\gamma(-u/2)}{\gamma(1+u/2)}.
\]

Applying the inverse Mellin transform, we define 
\[
g(y) = \frac{1}{2\pi i}
\int_{\Re(u)=-\frac52}\widetilde{g}(u)y^{-u}\,du
\quad\text{for }y>0.
\]
By Stirling's formula we have $\widetilde{g}(u)\ll|u|^{-3/2}$ for
$\Re(u)=-\frac52$,
and it follows that $g(y)$ extends continuously to $[0,\infty)$.

An application of Fubini's theorem shows that for any measurable functions $f,g$ on $(0,\infty)$ satisfying  $\int_0^\infty|f(y)|y^{-\sigma}\,dy<\infty$ and
$\int_\R|\widetilde{g}(\sigma+it)|\,dt<\infty$, we have
\[
\frac1{2\pi i}\int_{\Re(u)=\sigma}\widetilde{f}(1-u)\widetilde{g}(u)\,du
=\int_0^\infty f(y)g(y)\,dy.
\]
It is easy to see this is satisfied by the functions $f_n$ and $g$ above with $\sigma=-\frac52$. Hence, \eqref{stuff} becomes
\[
\int_0^1f_{k-1}(y)g(y)\,dy=0
\quad\text{for all }k\ge4\text{ with }k\equiv\epsilon\;(\text{mod }2).
\]

Suppose $\epsilon=1$. Then we have
\[
\int_{-\pi}^{\pi}
\cos(\tfrac{k-1}{2}\theta)g(|\sin(\theta/2)|)\,d\theta=0
\quad\text{for every odd }k\ge5.
\]
Thus, the function $h(\theta)=g(|\sin(\theta/2)|)$ has Fourier series of the form $a+b\cos\theta$. Since $h$ is continuous, we have $h(\theta)=a+b\cos\theta$ for all $\theta$, and thus $g(y)=a+b-2by^2$ for $y\in[0,1]$.

Since $g(y)=O(y^{5/2})$, we must have $a=b=0$.
Computing the Mellin transform again, we see that
$\widetilde{g}(u)=\int_1^\infty g(y)\,y^{u-1}\,dy$,
so $\widetilde{g}(u)$ is analytic for $\Re(u)<-\frac52$. 
Since $\frac{\gamma(-u/2)}{\Gamma_\C(\frac{1-u}{2})}$ is analytic and nonvanishing for $\Re(u)<-\frac52$, it follows that $\frac{\Gamma_\C(\frac{u}{2})}{\gamma(1+u/2)}$ is analytic for $\Re(u)<-\frac52$. This means that $\gamma(s)$ has a pole at each negative integer.

Applying Lemma~\ref{no pole lemma}, we have $\gamma(s)=cP(s)H^s\Gamma_\C(s)$, where $P$ is a monic polynomial whose roots are distinct non-positive integers. Since $\gamma(s)$ has poles at all negative integers, either $P(s)=1$ or $P(s)=s$. Thus, $\gamma(s)$ is of the form $c'H^s\Gamma_\C(s+\frac{\ell-1}{2})$ for some $\ell\in\{1,3\}$, as required.

\medskip
Now suppose that $\epsilon=0$. Then
\begin{equation}\label{Cheb}
\int_{-\pi}^\pi\frac{\sin(\frac{k-1}{2}\theta)}{\sin(\theta/2)}
|\sin(\theta/2)|g(|\sin(\theta/2)|)\,d\theta=0
\quad\text{for every even }k\ge4.
\end{equation}
Note that
$
\frac{\sin(\frac{k-1}{2}\theta)}{\sin(\theta/2)}
=W_{\frac{k-2}{2}}(\cos\theta),
$
where $W_n$ is a polynomial of degree $n$ (the `Chebyshev polynomial of the fourth kind', see \cite{m93}).
Writing $v=\cos\theta=1-2\sin^2(\theta/2)$, \eqref{Cheb} becomes
\[  0=\int_{-1}^1 W_{\frac{k-2}{2}}(v)h(v)
\sqrt{\frac{1-v}{1+v}}\,dv,
\quad\text{where }
h(v)=\frac{g\!\left(\sqrt{\frac{1-v}{2}}\right)}{\sqrt{1-v}}
\text{ for }v\in(-1,1).
\]
Since the $W_n$ are an orthogonal family with respect to the measure $\sqrt{\frac{1-v}{1+v}}\,dv$, there exists a constant $a$ such that $(h(v)-a)\sqrt{\frac{1-v}{1+v}}$ is continuous and absolutely integrable on $(-1,1)$, and orthogonal to all polynomials. It follows that $h(v)=a$ for all $v\in(-1,1)$, and thus $g(y)=a\sqrt{2}y$ for $y\in(0,1)$.
Since $g(y)=O(y^{5/2})$, we must have $a=0$, and thus
$\widetilde{g}(u)=\int_1^\infty g(y)\,y^{u-1}\,dy$.

As before we conclude that $\frac{\Gamma_\C(\frac{1+u}{2})}{\gamma(1+u/2)}$ is analytic for $\Re(u)<-\frac52$. Defining $\tilde{\gamma}(s)=\gamma(s+\frac12)$ we see that $\tilde{\gamma}(s)$ satisfies the hypotheses imposed on $\gamma(s)$ in Theorem~\ref{Main theorem} and has a pole at every negative integer. Appealing again to Lemma~\ref{no pole lemma}, we find that $\tilde{\gamma}(s)$ is of the form $cP(s)H^s\Gamma_\C(s)$, where either $P(s)=1$ or $P(s)=s$.  
Thus $\gamma(s)=c'H^sP(s-\frac12)\Gamma_\C(s-\frac12)$.
Thanks to the hypothesis $\Re(\mu_j)>-\frac12\lambda_j$ in Theorem~\ref{Main theorem}, $\gamma(s)$ cannnot have a pole at $\frac12$, and therefore $P(s)=s$. Hence $\gamma(s)=c''H^s\Gamma_\C(s+\frac12)$, as required.
\end{proof}

Now we can complete the proof of Theorem~\ref{Main theorem}. 
In view of Propositions~\ref{prop1} and \ref{prop2}, we may assume that $\gamma(s)=cH^s\Gamma_\C(s+\frac{\ell-1}{2})$, where $c,H\in\R_{>0}$ and $\ell\in\{1,2,3\}$ with $\ell\equiv\epsilon\pmod2$. 
In this case we fall back on a more traditional proof of the converse theorem as in \cite{r77}, but we must first address the fact that our gamma factor differs from the expected one by the exponential factor $H^s$.

Suppose first that $H>1$. Equation \eqref{stuff} becomes 
\begin{equation*}
\frac1{2\pi i}\int_{\Re(u)=-\frac52} H^{-u}
\frac{\Gamma_\C(\frac{k-1+u}{2})\Gamma_\C(\frac{-u}{2}+\frac{\ell-1}{2})}{\Gamma_\C(\frac{k+1-u}{2})\Gamma_\C(1+\frac{u}{2}+\frac{\ell-1}{2})}\,du
=0
\quad\text{for all }k\ge4\text{ with }k\equiv\epsilon\;(\text{mod }2).
\end{equation*}
Since $H>1$, we can shift the contour to the right, as the integrand vanishes in the limit as $\Re(u)\rightarrow\infty$.

Suppose $\ell=2$. Then when $k=4$, we pick up poles at $u=1,3$ and derive that
$0= H^{-1}-H^{-3}$.
Thus $H=1$, giving a contradiction. Similarly, when $\ell=3$ we take $k=5$; we have poles at $u=2,4$, getting
$0=H^{-2} - H^{-4}$,
which results in the same contradiction. When $\ell=1$, 
looking at $k=5$, we have poles at $u=0,2,4$; thus 
$0= 1- 4 H^{-2}  + 3H^{-4}$,
which implies that $H^{-2}=\frac{1}{3}$. Now looking at $k=7$, we get an extra pole at $u=6$, so that
$0= \frac{ 2}{ 3 }- 6 H^{-2} + 12 H^{-4} - \frac{20}{3}H^{-6}$,
which is not satisfied for $H^{-2}= \frac{1}{3}$.

Hence $H\le1$.
Let $f(z)= \sum_{n=1}^{\infty} f_nn^{ \frac{\ell-1}{2}}e(nz)$.
Applying Hecke's argument \cite[Theorem~4.3.5]{m06} to our Voronoi formulas \eqref{functional equation}, we get the modularity relation
\begin{equation}{\label{modularity original}}
    f\!\left(\frac{ \frac{-1}{H^2z}+a}{q}  \right)
     = \omega\chi(\bar{a})(-iHz)^\ell f\!\left(\frac{z-\bar{a}}{q}\right)
\end{equation} 
for all $q\in N\Z_{>0}$ and $a,\bar{a}\in\Z$ with $a\bar{a}\equiv1\pmod{q}$. 

The $\ell$-slash operator is defined for matrices $M=\begin{psmallmatrix}
 a & b \\
 c & d 
\end{psmallmatrix}$ of positive determinant by 
\[  (f|M)(z)= (\det M)^{\ell/2} (cz+d)^{-\ell} f\!\left(\frac{az+b}{cz+d}\right). \]
In this notation, \eqref{modularity original} becomes 
\[     f\bigg| \begin{pmatrix}
aH^2    &    -1 \\
qH^2 & 0   \\
\end{pmatrix} = i^{-\ell}\omega\chi(\bar{a})f\bigg| \begin{pmatrix}
1   &    -\bar{a} \\
0 & q  \\
\end{pmatrix}, \]
or equivalently
\begin{equation}{\label{Modularity for H}}
      f \bigg| \begin{pmatrix}
 aH & \frac{a\bar{a} H^2-1}{qH}  \\
 qH &  \bar{a} H  \\
\end{pmatrix} = i^{-\ell}\omega\chi(\bar{a})f  . 
\end{equation}                      

We may assume that $f$ is not identically $0$, since the conclusion is trivial otherwise.
Applying \eqref{functional equation} twice, we have
\[
\Lambda_f\bigl(s,\tfrac{a}{q}\bigr) = \omega\chi(\bar{a})q^{1-2s}\Lambda_f\bigl(1-s,-\tfrac{\bar{a}}{q}\bigr)
=\omega^2\chi(-1)\Lambda_f\bigl(s,\tfrac{a}{q}\bigr),
\]
and thus $(i^{-\ell}\omega)^2=(-1)^\ell\chi(-1)=1$.

Suppose $H<1$. Taking $a=\bar{a}=1$, the matrix above has absolute trace $2H<2$, so is elliptic. Unless $2H=\zeta +\zeta^{-1}$ for some root of unity $\zeta$, this matrix has infinite order, and then a generalization of Weil's lemma \cite[Lemma~4.2]{src18} implies that $f=0$.

Hence $2H$ must be an algebraic integer. For prime $p$ and $i=1,2$, let 
$M_{i,p}$ be the matrix $\begin{pmatrix}
  H & \frac{H-H^{-1}}{q_i} \\
  q_i H & H 
\end{pmatrix}$, where $q_1=pN$, $q_2=(p+1)N$. 
We compute that
\[   \tr(M_{1,p}M_{2,p})=(2H)^2-2-\bigl(4-(2H)^2\bigr)\bigl(4p(p+1)\bigr)^{-1}. \]
Since $\lim_{p\rightarrow\infty}\tr(M_{1,p}M_{2,p})=(2H)^2-2$, we have $|\tr(M_{1,p}M_{2,p})|<2$ for all sufficiently large primes $p$, and
again by Weil's lemma, it follows that $\tr(M_{1,p}M_{2,p})$ is an algebraic integer.
Let $K$ be a number field containing $2H$. Clearly $\tr(M_{1,p}M_{2,p})\in K$ for each $p$. Taking norms,
\[ N_{K/\Q}\bigl(\tr(M_{1,p}M_{2,p})+2-(2H)^2\bigr) = \bigl(-4p(p+1)\bigr)^{-[K:\Q]} N_{K/\Q}\bigl(4-(2H)^2\bigr). \]
Taking a sufficiently large prime $p$, this is not an integer, giving a contradiction. 

Hence $H=1$. By \eqref{Modularity for H}, it follows that
\[ f|M = i^{-\ell}\omega\chi(M)f
\quad\text{for }
M=\begin{pmatrix}
 a & b \\
 q & \bar{a} 
\end{pmatrix} \in \Gamma_0(N)
\text{ with }q>0,
\]
where we define $\chi(M)= \chi(\bar{a})$. Taking $M_1,M_2\in\Gamma_0(N)$ such that $M_1$, $M_2$ and $M_1M_2$ are all of this form, we have 
\[ i^{-\ell}\omega\chi(M_1M_2)f=f|M_1M_2=(f|M_1)|M_2 = (i^{-\ell}\omega)^2\chi(M_1)\chi(M_2)f.\]
Since $\chi(M_1M_2)=\chi(M_1)\chi(M_2)$, we have must have $\omega=i^\ell$. 
Finally, since the $M$ of the above form, together with $ \begin{psmallmatrix}
 1 & 1 \\
 0 & 1 
\end{psmallmatrix}$ and $\begin{psmallmatrix}
 -1 & 0 \\
 0 & -1 
\end{psmallmatrix}$, generate $\Gamma_0(N)$, we conclude that $f\in M_\ell(N,\chi)$, as required.

\thispagestyle{empty}
\bibliographystyle{amsalpha}
\bibliography{reference}
\end{document}